\documentclass[12pt,reqno]{amsart}

\usepackage{ifthen}
\usepackage{amsmath}
\usepackage{amssymb}

\usepackage{verbatim}

\textheight = 8.00in
\textwidth  = 6.00in
\hoffset    = -0.25in

\newcommand{\cB}{{\mathcal B}}

\newcommand{\sA}{{\mathsf A}}
\newcommand{\sB}{{\mathsf B}}
\newcommand{\sD}{{\mathsf D}}
\newcommand{\sI}{{\mathsf I}}
\newcommand{\sL}{{\mathsf L}}
\newcommand{\sM}{{\mathsf M}}
\newcommand{\sU}{{\mathsf U}}
\newcommand{\sV}{{\mathsf V}}

\newcommand{\Lam}{\Lambda}

\newcommand{\F}{{\mathbb F}}
\newcommand{\Z}{{\mathbb Z}}

\newcommand{\eps}{\varepsilon}
\renewcommand{\phi}{\varphi}
\newcommand{\sig}{\sigma}

\newcommand{\lpr}{\left(}
\newcommand{\rpr}{\right)}
\newcommand{\lfl}{\left\lfloor}
\newcommand{\rfl}{\right\rfloor}

\newcommand{\stm}{\setminus}
\newcommand{\seq}{\subseteq}

\newcommand{\longc}{,\ldots,}
\newcommand{\longe}{=\dotsb=}
\newcommand{\longp}{+\dotsb+}
\newcommand{\longm}{-\dotsb-}
\newcommand{\longu}{\cup\dotsb\cup}
\newcommand{\longi}{\cap\dotsb\cap}

\DeclareMathOperator{\rk}{rk}
\DeclareMathOperator{\diag}{diag}

\theoremstyle{plain}

\newtheorem{lemma}{Lemma}
\newtheorem{theorem}{Theorem}
\newtheorem{proposition}{Proposition}

\theoremstyle{remark}
\newtheorem*{remark}{Remark}

\newcommand{\reft}[1]{~\ref{t:#1}}
\newcommand{\refp}[1]{~\ref{p:#1}}

\newcommand{\refs}[1]{~\ref{s:#1}}
\newcommand{\refb}[1]{~\cite{b:#1}}

\title{Additive Bases in Abelian Groups}

\author{Vsevolod F. Lev}
\email{seva@math.haifa.ac.il}
\address{Department of Mathematics, The University of Haifa at Oranim,
    Tivon 36006, Israel}

\author{Mikhail E. Muzychuk}
\email{muzy@netanya.ac.il}
\address{Department of Computer Science and Mathematics,
    Netanya Academic College, Netanya 42365, Israel}

\author{Rom Pinchasi}
\email{room@math.technion.ac.il}
 \address{Department of Mathematics, Technion --- Israel Institute of
    Technology, Haifa 32000, Israel}

\begin{document}
\baselineskip=16pt

\begin{abstract}
Let $G$ be a finite, non-trivial abelian group of exponent $m$, and
suppose that $B_1\longc B_k$ are generating subsets of $G$. We prove that
if $k>2m\ln\log_2 |G|$, then the multiset union $B_1\longu B_k$ forms an
additive basis of $G$; that is, for every $g\in G$ there exist
 $A_1\seq B_1\longc A_k\seq B_k$ such that $g=\sum_{i=1}^k\sum_{a\in A_i} a$.
This generalizes a result of Alon, Linial, and Meshulam on the additive
bases conjecture.

As another step towards proving the conjecture, in the case where
 $B_1\longc B_k$ are finite subsets of a vector space we obtain lower-bound
estimates for the number of distinct values, attained by the sums of the
form $\sum_{i=1}^k \sum_{a\in A_i} a$, where $A_i$ vary over all subsets
of $B_i$ for each $i=1\longc k$.

Finally, we establish a surprising relation between the additive bases
conjecture and the problem of covering the vertices of a unit cube by
translates of a lattice, and present a reformulation of (the strong form
of) the conjecture in terms of coverings.
\end{abstract}

\maketitle

\section{Introduction and statement of the results}

Given an abelian group $G$ and a multiset $B=\{b_1\longc b_n\}\seq G$, we say
that $B$ is an \emph{additive basis} of $G$ if for every group element
 $g\in G$ there exists an index set $I\seq[1,n]$ such that
$g=\sum_{i\in I} b_i$.

Jaeger et al conjectured in \refb{jlpt} that for any prime $p$ there is an
integer $k(p)>0$ with the property that if $B_1\longc B_{k(p)}$ are (linear)
bases of the finite-dimensional vector space $V$ over the $p$-element field,
then the multiset union $B_1\longu B_{k(p)}$ is an additive basis of $V$.
This statement is known as the \emph{additive basis conjecture}. Alon et al
proved in \refb{alm}, in two different ways, that there exists a function
$c(p)$ such that if $|V|=p^n$, $k>c(p)\ln n$, and $B_1\longc B_k$ are bases
of $V$, then the multiset union $B_1\longu B_k$ is an additive basis of $V$.
One of their proofs, using the polynomial method and properties of the
permanent, shows that $B_1\longu B_k$ is an additive basis of $V$, provided
that $k\ge(p-1)\ln n+p-2$; another proof, using exponential sums, requires
$k\ge(p^2/2)\ln 2pn+1$. In this paper we introduce yet another approach
allowing us to establish the following generalization.

\begin{theorem}\label{t:main}
Let $G$ be a finite, non-trivial abelian group of exponent $m$, and suppose
that $B_1\longc B_k$ are generating subsets of $G$. If $k>2m\ln\log_2 |G|$,
then the multiset union $B_1\longu B_k$ is an additive basis of $G$.
\end{theorem}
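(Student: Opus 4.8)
The plan is to fix a target element $g\in G$ and show that the number of its representations, $r(g):=\#\{(A_1\longc A_k):A_i\seq B_i,\ g=\sum_{i=1}^{k}\sum_{a\in A_i}a\}$, is positive. Writing $\Sigma(B):=\{\sum_{a\in A}a:A\seq B\}$ for the set of subset sums of a multiset $B\seq G$, this is the same as proving $\Sigma(B_1)\longp\Sigma(B_k)=G$; it is also what one reads off the identity $r(g)=|G|^{-1}\sum_{\chi\in\widehat G}\overline{\chi(g)}\prod_{i=1}^{k}\prod_{b\in B_i}(1+\chi(b))$, valid by orthogonality of characters. Estimating the non-principal terms of this identity by absolute values alone yields only the much weaker bound $k\gtrsim m^2\ln|G|$, since a character of order close to $m$ can have $|1+\chi(b)|$ close to $2$ for almost all $b$; so I would use the identity only to dispose of small cases, and would make the main line of the argument combinatorial, tracking the growth of the partial sumsets $\Sigma(B_1)\longp\Sigma(B_j)$.

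Put $S_j:=\Sigma(B_1)\longp\Sigma(B_j)$, so $S_0=\{0\}$ and $S_j=S_{j-1}+\Sigma(B_j)$. Since $0\in\Sigma(B_{j+1})$ we get $S_j\seq S_{j+1}$ and, more importantly, the stabilizers $H_j:=\{x\in G:S_j+x=S_j\}$ form an increasing chain $\{0\}=H_0\seq H_1\seq\dotsb$; hence at any moment the problem may be pushed into the quotient $G/H_j$, where the image of $S_j$ has trivial stabilizer, stays proper unless $S_j=G$, and the images of $B_{j+1}\longc B_k$ still generate. The engine of the proof is then a block estimate roughly of this shape: for suitable absolute constants $c$ and $\theta>1$ (one may take $c=2$, $\theta=e$, which accounts for the factor $2m$), whenever $G'$ is a non-trivial quotient of $G$, $A\seq G'$ contains $0$, is proper, and has trivial stabilizer, and $C_1\longc C_{cm}$ generate $G'$, then $|A+\Sigma(C_1)\longp\Sigma(C_{cm})|\ge\min\{|G'|,\,|A|^{\theta}\}$. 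Granting this, I would split $\{1\longc k\}$ into $\lfloor k/(cm)\rfloor$ consecutive blocks of length $cm$; the hypothesis $k>2m\ln\log_2|G|$ provides at least $\log_{\theta}\log_2|G|$ of them (with a little room to spare). Applying the estimate block by block — after the first block $|S|\ge2$ because every $B_i$ generates, and each subsequent block either finishes the job or replaces $|S|$ by $|S|^{\theta}$, passing through $2,2^{\theta},2^{\theta^2},\dotsc$ and restarting in a fresh quotient whenever the stabilizer of the running sumset grows — the size reaches $|G|$ before the blocks are used up, so $S_k=G$.

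The main obstacle is the block estimate, for which the basic instrument is Kneser's theorem, $|X+Y|\ge|X+K|+|Y+K|-|K|$ with $K=\mathrm{stab}(X+Y)$. The difficulty is that one generating set, even one of its elements, can increase $|A|$ by as little as $1$ — precisely when $A$ is an \emph{interval} in that element's direction. The point that rescues the argument is that a set with trivial stabilizer cannot be interval-like in the direction of every member of a generating set, for then that generator would belong to its stabilizer; so distributing a whole generating set's worth of elements over $A$ must produce a definite multiplicative gain. Turning this into a quantitative bound — showing that no cyclic direction of $G'$ can be saturated before \emph{width} $m$, which is what forces the block length $cm$, and that over $cm$ generating sets these successive gains, applied to a set whose thinness in the relevant directions is governed by its own size, compound into the power $|A|^{\theta}$ — is where the real work lies, along with the case split according to whether the stabilizer of the running sumset jumps inside a block or stays trivial throughout. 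The character identity of the first paragraph then takes care of the bottom of the recursion: the quotients $G'$ whose order is bounded in terms of $m$, where the combinatorial iteration has no room to operate.
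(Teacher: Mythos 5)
Your overall architecture --- iterating a growth estimate for the partial sumsets $S_j=\Sigma(B_1)\longp\Sigma(B_j)$ so that roughly $m$ steps yield a fixed multiplicative gain, whence $O(m\ln\log_2|G|)$ steps suffice --- has the right shape, and it is the shape of the paper's proof. But the proposal has a genuine gap at its load-bearing point: the ``block estimate'' $|A+\Sigma(C_1)\longp\Sigma(C_{cm})|\ge\min\{|G'|,\,|A|^{\theta}\}$ is never proved, and you say yourself that turning the Kneser/stabilizer heuristic into this quantitative bound ``is where the real work lies.'' Kneser's theorem gives only additive gains, and converting the qualitative remark that a set with trivial stabilizer cannot be interval-like in every generating direction into a compounding power gain $|A|\mapsto|A|^{\theta}$ over $cm$ generating sets is not routine; no mechanism for it is offered, and the case analysis around stabilizer jumps inside a block is likewise only gestured at. The fallback device for the ``bottom of the recursion'' is also not viable as described: the character-sum bound you quote requires $k\gtrsim m^2\ln|G'|$, which the hypothesis $k>2m\ln\log_2|G|$ does not supply even for quotients $G'$ of order bounded in terms of $m$.

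For comparison, the paper obtains the per-step gain in one stroke from the Ruzsa sum triangle inequality $|A_0|^{n-1}|A_1\longp A_n|\le|A_0+A_1|\dotsb|A_0+A_n|$, applied with $n=m-1$, $A_0=S_{j-1}$, and $A_1\longe A_{m-1}=B_j^\ast$. Since $B_j$ generates $G$ and $G$ has exponent $m$, the $(m-1)$-fold sumset of $B_j^\ast$ is all of $G$, so the inequality collapses to
$$ |S_j| \ge |S_{j-1}|^{1-\frac1{m-1}}\,|G|^{\frac1{m-1}}, $$
i.e.\ the deficiency $\ln(|G|/|S_j|)$ contracts by the factor $1-\frac1{m-1}$ at \emph{every single step} --- no stabilizers, quotients, Kneser, or characters are needed, and the gain is in fact much stronger than $|A|\mapsto|A|^\theta$ when $|S_{j-1}|$ is small. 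Starting from $|S_1|\ge 2^{\rk(G)}$, this yields $|S_j|>|G|/2$ for $j=\lfl m\ln\log_2|G|\rfl$, the same holds for the sumset built from $B_{j+1}\longc B_{2j}$, and the pigeonhole observation that $|S|+|T|>|G|$ forces $S+T=G$ finishes the proof. If you wish to rescue your outline, the Ruzsa inequality is the tool that replaces your unproved block estimate.
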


Our argument is based on the following interpretation of the problem. For a
subset $B$ of an abelian group $G$ write
  $$ \textstyle B^\ast := \Big\{ \sum_{a\in A} a \colon A\seq B,
                                                       \,|A|<\infty \Big\} $$
(the \emph{subset sum set} of $B$), and given subsets $B_1\longc B_k\seq G$
let
  $$ B_1\longp B_k
                  := \{ b_1\longp b_k\colon b_1\in B_1\longc b_k\in B_k \} $$
(the \emph{sumset} of $B_1\longc B_k$). Clearly, in order for the multiset
union $B_1\longu B_k$ to form an additive basis of $G$, it is necessary and
sufficient that $B_1^\ast\longp B_k^\ast=G$. Accordingly, Theorem \reft{main}
can be equivalently restated as follows. \addtocounter{theorem}{-1}
\renewcommand{\thetheorem}{\arabic{theorem}$'$}
\begin{theorem}
Let $G$ be a finite, non-trivial abelian group of exponent $m$, and suppose
that $B_1\longc B_k$ are generating subsets of $G$. If $k>2m\ln\log_2 |G|$,
then $B_1^\ast\longp B_k^\ast=G$.
\end{theorem}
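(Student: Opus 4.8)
The plan is to work entirely with the reformulation $B_1^\ast\longp B_k^\ast=G$ and to build up this sumset one factor at a time, tracking the \emph{logarithmic deficiency}
$$ \phi(S):=\log_2\frac{|G|}{|S|}\qquad(\emptyset\neq S\seq G), $$
which equals $\log_2|G|$ when $S=\{0\}$ and equals $0$ precisely when $S=G$. Put $S_0:=\{0\}$ and $S_t:=B_1^\ast\longp B_t^\ast$; then $0\in S_t$ and, for $t\ge1$, $S_t$ generates $G$ (since $B_1\seq B_1^\ast\seq S_t$), and the goal is to reach $\phi(S_k)=0$.

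The heart of the matter will be a lemma to the effect that a \emph{short} run of generating sets forces \emph{multiplicative} progress: for a suitable $b=O(m)$ --- I would try to make $b=\lceil 2m\ln 2\rceil$ work --- and for every $S\ni 0$ with $S\neq G$ and all generating $C_1\longc C_b\seq G$, either $S+C_1^\ast\longp C_b^\ast=G$, or $\phi(S+C_1^\ast\longp C_b^\ast)\le\tfrac12\phi(S)$. Granting this, break $B_1\longc B_k$ into consecutive blocks of $b$ sets and apply the lemma block by block: after $j$ blocks $\phi\le 2^{-j}\log_2|G|$, so after about $\log_2\log_2|G|$ blocks the deficiency has fallen below $1$, i.e.\ $|S|>|G|/2$, and a final block (here one invokes the $|S|>|G|/2$ case of the lemma, which via Kneser's theorem forces the stabilizer of the new sumset to have index at most $2$, hence to equal $G$) finishes the job. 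The sets consumed number at most $b\,(\log_2\log_2|G|+O(1))\approx 2m\ln 2\cdot\log_2\log_2|G|=2m\ln\log_2|G|$; tuning $b$ and the block boundaries turns ``$\approx$'' into the exact hypothesis $k>2m\ln\log_2|G|$.

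To prove the lemma I would pass to the quotient $\bar G:=G/\mathrm{Stab}(S)$, in which $\bar S:=S/\mathrm{Stab}(S)$ has trivial stabilizer, and analyse the effect of adjoining $\bar C_1^\ast\longp\bar C_b^\ast$, using two facts. First, Kneser's addition theorem: while the running sumset in $\bar G$ still has trivial stabilizer each new $\bar C_j^\ast$ strictly enlarges it, and the moment a nontrivial stabilizer $H$ appears the deficiency has already dropped (the sumset then contains a coset of $H$, of density $|H|/|G|$). Second, the exponent hypothesis, in the shape: if $c\in G$ has order dividing $m$ then $\{0,c\}$ summed with itself $m-1$ times is all of $\langle c\rangle$ --- this is what lets a bounded run of steps \emph{saturate} directions rather than merely shift the sumset, and it is the only place the number $m$ enters the bound. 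The step I expect to be the genuine obstacle is making the multiplicative gain \emph{uniform} in $|S|$: when $|S|$ is small one needs real ``box-filling'' expansion, whereas when $|S|$ is already large one only needs $\phi$ to halve, and reconciling the two regimes --- most likely by an induction on $|G|$ that descends through $G/\mathrm{Stab}$ at each block, combined with Kneser and with the observation that a generating set lies in no proper coset --- will carry most of the weight of the argument.
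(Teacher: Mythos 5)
Your outer scaffolding (iteratively adjoin the $B_j^\ast$, measure progress by the logarithmic deficiency, note that two halves of size $>|G|/2$ sum to all of $G$) matches the paper's, and your accounting of how many sets a halving lemma would consume is essentially right. But the proof stands or falls on the key lemma --- that a block of $O(m)$ generating sets either completes the sumset or halves $\phi$ --- and that lemma is exactly what you have not proved. The route you sketch for it, Kneser's theorem plus quotienting by stabilizers, cannot deliver the required multiplicative gain. While the running sumset $S$ has trivial stabilizer, Kneser gives only the additive bound $|S+C^\ast|\ge|S|+|C^\ast|-1$; starting from $|S|\approx 2^r$ in a group of order $p^r$, reaching even $\sqrt{|G|\,|S|}$ by increments of $2^r-1$ takes on the order of $(p/2)^{r/2}$ steps, not $O(m)$. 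And when a nontrivial stabilizer $H$ does appear, all you learn is that the sumset is a union of $H$-cosets; $|H|$ may be as small as the least prime divisor of $|G|$, which does not halve the deficiency, and descending to $G/H$ preserves the density of the sumset rather than improving it. So both branches of your dichotomy fail to produce the claimed progress, and you yourself flag this reconciliation as ``the genuine obstacle'' --- it is; it is the theorem.

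The missing device is a mechanism converting the exponent hypothesis into \emph{single-step multiplicative} growth. You correctly isolate the right consequence of the hypothesis --- in effect that $(m-1)C^\ast=G$ for any generating $C$ --- but you need something like the Ruzsa sum triangle inequality
$$ |A_0|^{n-1}\,|A_1\longp A_n| \le |A_0+A_1|\dotsb|A_0+A_n|, $$
applied with $n=m-1$, $A_0=S$, and $A_1\longe A_{m-1}=C^\ast$, to conclude
$|S+C^\ast|\ge|S|^{1-\frac1{m-1}}|G|^{\frac1{m-1}}$, i.e.\ that the deficiency contracts by the factor $1-\frac1{m-1}$ at \emph{every} step, uniformly in $|S|$. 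With that single inequality in hand your block structure becomes unnecessary (one simply iterates the contraction $\lfloor m\ln\log_2|G|\rfloor$ times for each half, using $|S_1|\ge 2^{\rk G}$ and $|G|\le m^{\rk G}$ to seed the recursion), and the pigeonhole endgame you describe closes the proof. Without it, the argument has a genuine gap precisely where the theorem's content lies.
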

\renewcommand{\thetheorem}{\arabic{theorem}}

We notice that if $B_1\longc B_k$ are bases of a vector space, then the
sumset $B_1^\ast\longp B_k^\ast$ has a transparent geometric meaning: namely,
it is the  Minkowski sum of the vertex sets of the parallelepipeds, spanned
by $B_1\longc B_k$. One may hope that studying sumsets of this form can
eventually lead to a proof of the additive basis conjecture. We establish two
results in this direction.

\begin{theorem}\label{t:char0}
Let $k$ be a positive integer. If $B_1\longc B_k$ are finite, non-empty
subsets of the vector space $V$ over a field of infinite characteristic, then
  $$ |B_1^\ast\longp B_k^\ast| \ge 2^{\rk(B_1)}\,
                             (3/2)^{\rk(B_2)} \dotsb ((k+1)/k)^{\rk(B_k)}. $$
In particular, if $\dim V=n$ and $B_1\longc B_k$ are bases of $V$, then
  $$ |B_1^\ast\longp B_k^\ast| \ge (k+1)^n. $$
\end{theorem}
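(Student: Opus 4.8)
The plan is to induct on $k$, peeling off one set at a time and controlling how much the subset-sum set grows when we add the next block. For the base case $k=1$ we need $|B_1^\ast|\ge 2^{\rk(B_1)}$: choosing a maximal linearly independent subset $\{b_1,\dots,b_r\}\subseteq B_1$ with $r=\rk(B_1)$, the $2^r$ subset sums $\sum_{i\in I}b_i$ are pairwise distinct because any nontrivial linear relation among them would contradict independence (here we use that the characteristic is $0$, so that $I\mapsto\sum_{i\in I}b_i$ is injective on subsets of an independent set). For the inductive step, suppose the bound holds for $k-1$ sets and put $S:=B_1^\ast+\dots+B_{k-1}^\ast$ and $r:=\rk(B_k)$. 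The goal is
$$|S+B_k^\ast|\ge \frac{k+1}{k}\cdot|S|\cdot\frac{\text{(something capturing }r\text{)}}{1},$$
and more precisely I would aim to prove the clean multiplicative estimate $|S+B_k^\ast|\ge ((k+1)/k)^{r}\,|S|$, which closes the induction immediately.

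The heart of the matter is therefore the following growth lemma: if $S$ is a finite nonempty subset of $V$ and $B$ is a finite subset with $\rk(B)=r$, then $|S+B^\ast|\ge ((k+1)/k)^{r}|S|$ — but wait, this cannot hold with a constant depending on $k$ for an arbitrary $S$; the point is that $S$ is not arbitrary, it is itself a sumset of $k-1$ subset-sum sets. So the real inductive hypothesis has to be stated more carefully: I would carry along the stronger assertion that $B_1^\ast+\dots+B_{k-1}^\ast$ contains, for a suitable independent set, a full ``grid'' of the form $\{\,\sum_j c_j b_j : 0\le c_j\le k-1\,\}$ of size $k^{\rk(B_1)+\dots}$ — no, that over-counts. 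Let me restate the induction honestly: the quantity to track is genuinely just the cardinality, and the growth factor $((k+1)/k)^{r}$ when adjoining the $k$-th block must come from the fact that $S=B_1^\ast+\dots+B_{k-1}^\ast$ already has ``thickness'' at least $k$ in the directions spanned by $B_k$. Concretely, pick an independent $\{b_1,\dots,b_r\}\subseteq B_k$; along the line direction $b_1$, the set $B_1^\ast+\dots+B_{k-1}^\ast$ contains arithmetic-progression-like structure of length $k$ because each $B_i$ generates $V$ (hence $B_i^\ast$ is not contained in any proper coset), so by a Plünnecke/Ruzsa-type or direct projection argument each new summand $b_j\in B_k^\ast$ adds a factor $(k+1)/k$.

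The cleanest route, and the one I would write up, uses a \emph{linear functional / layering} argument rather than sumset inequalities. Choose independent $b_1,\dots,b_r\in B_k$ and a linear functional $\ell_1$ with $\ell_1(b_1)=1$, $\ell_1(b_2)=\dots=\ell_1(b_r)=0$, chosen generically so that $\ell_1$ separates points of the relevant finite set. Slice $S$ into fibers of $\ell_1$; on each fiber, adding $\{0,b_1\}\subseteq B_k^\ast$ and using that $S$ meets at least $k$ distinct values of $\ell_1$ (because $B_1,\dots,B_{k-1}$ each generate $V$, so the $\ell_1$-images of $B_1^\ast,\dots,B_{k-1}^\ast$ each have size $\ge 2$ and sum to a set of size $\ge k$ by Cauchy--Davenport over $\Z$), one gains the factor $(k+1)/k$ for that coordinate; then iterate over $j=2,\dots,r$, restricting to a single fiber of $\ell_1,\dots,\ell_{j-1}$ at each stage, where the inductive hypothesis still supplies $\ge k$ layers. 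Multiplying the $r$ gains of $(k+1)/k$ against the bound $|S|\ge 2^{\rk B_1}(3/2)^{\rk B_2}\cdots(k/(k-1))^{\rk B_{k-1}}$ from the induction hypothesis yields exactly the claimed inequality, and the ``in particular'' statement follows by setting each $\rk(B_i)=n$. The main obstacle I anticipate is making the fiber/layer counting rigorous: one must ensure that after restricting to a common fiber of $\ell_1,\dots,\ell_{j-1}$ the remaining set still has $\ge k$ layers in the $b_j$-direction and that the genericity choices of the functionals are mutually compatible — this is where the hypothesis that \emph{every} $B_i$ is generating (not merely that their union is) does the real work, and getting the bookkeeping clean without losing a constant is the crux of the argument.
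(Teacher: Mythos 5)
Your base case is correct and identical to the paper's. The inductive step, however, rests on the ``clean multiplicative estimate'' $|S+B_k^\ast|\ge((k+1)/k)^{\rk(B_k)}\,|S|$ with $S=B_1^\ast\longp B_{k-1}^\ast$, and this lemma is \emph{false}. Take $V=\mathbb{Q}$ (so every $B_i$ is a basis and each $\rk(B_i)=1$), $k=4$, $B_1=\{1\}$, $B_2=\{2\}$, $B_3=\{4\}$, $B_4=\{1\}$. Then $S=\{0,1\longc 7\}$ has $8$ elements while $S+B_4^\ast=\{0,1\longc 8\}$ has $9<\tfrac54\cdot 8$ elements. The heuristic behind your lemma points in the wrong direction: the gain from adjoining $\{0,b\}$ to $S$ equals the number of maximal $b$-progressions inside $S$, which is at least $|\pi(S)|$ for the projection $\pi$ along $b$; to get a factor $(k+1)/k$ you would need $|\pi(S)|\ge|S|/k$, i.e.\ that the fibers of $S$ in the $b$-direction are \emph{short} on average --- the opposite of the ``thickness at least $k$'' you invoke. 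Your Cauchy--Davenport observation only shows $|\ell_1(S)|\ge k$, which says nothing about the ratio $|\pi(S)|/|S|$ (in the example above $S$ is a single long run). Note also that the theorem assumes nothing about the $B_i$ generating $V$; the bound is purely rank-dependent, so any appeal to a generating hypothesis is off the table.

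The paper's proof sidesteps exactly this trap by never comparing the gain to $|S|$. It runs a double induction (on $k$, and for fixed $k$ on the ranks), peels off a single vector $b\in B_k$, writes $B_k^\ast=B_0^\ast\cup(B_0^\ast+b)$ with $B_0=B_k\stm\{b\}$, and bounds the number of elements contributed by the translate from below by $|(\pi(B_1))^\ast\longp(\pi(B_k))^\ast|$, where $\pi$ projects along $b$. The induction hypothesis is then applied to the \emph{projected} system to give $|(\pi(B_1))^\ast\longp(\pi(B_k))^\ast|\ge\prod_{j=1}^{k}(1+1/j)^{\rk(B_j)-1}$, and to the untranslated part to give $|B_1^\ast\longp B_{k-1}^\ast+B_0^\ast|\ge\prod_{j=1}^{k-1}(1+1/j)^{\rk(B_j)}\,(1+1/k)^{\rk(B_k)-1}$; these two terms sum exactly to $\prod_{j=1}^{k}(1+1/j)^{\rk(B_j)}$ by a telescoping identity. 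In other words, the correct comparison is between the gain and the \emph{inductive lower bound for the projected configuration}, not between the gain and the actual (possibly much larger) size of $S$. To repair your argument you would need to replace your growth lemma with this projection-plus-induction step; as written, the crux of your inductive step cannot be made to work.
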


Notice, that in the second estimate of the theorem equality is attained if
$B_1\longe B_k$.

\begin{theorem}\label{t:charp}
If $B_1$ and $B_2$ are finite subsets of a vector space $V$ over a field of
infinite or odd characteristic, then
  $$ |B_1^\ast+B_2^\ast| \ge \lpr \frac83\rpr^{(\rk(B_1)+\rk(B_2))/2}. $$
In particular, if $\dim V=n$ and $B_1,B_2$ are bases of $V$, then
  $$ |B_1^\ast+B_2^\ast| \ge \lpr \frac83\rpr^n. $$
\end{theorem}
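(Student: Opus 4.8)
\emph{Proof idea.} I would reduce to the case of linearly independent $B_1,B_2$, dispose of infinite characteristic via Theorem~\ref{t:char0}, and over a field of odd characteristic argue by induction on $\rk(B_1)+\rk(B_2)$, peeling off basis vectors. For the reduction, replace each $B_i$ by a linearly independent $B_i'\subseteq B_i$ with $|B_i'|=\rk(B_i)=:r_i$; then $B_i'^\ast$ has exactly $2^{r_i}$ elements and $B_i'^\ast\subseteq B_i^\ast$, so $|B_1^\ast+B_2^\ast|\ge|B_1'^\ast+B_2'^\ast|$ and it suffices to treat independent $B_1,B_2$. If $r_1r_2=0$ the bound is immediate, since then $|B_1^\ast+B_2^\ast|\ge 2^{\max(r_1,r_2)}\ge(8/3)^{(r_1+r_2)/2}$ (as $2\ge\sqrt{8/3}$), so assume $r_1,r_2\ge 1$. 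If the characteristic is infinite, Theorem~\ref{t:char0} applies: reindexing so that $r_1\ge r_2$, it gives $|B_1^\ast+B_2^\ast|\ge 2^{r_1}(3/2)^{r_2}$, and $2^{r_1}(3/2)^{r_2}\ge(8/3)^{(r_1+r_2)/2}$ for all $r_1\ge r_2\ge 0$ — the logarithm of the quotient is affine in $r_2$, decreasing, and equals $r_1\log(9/8)>0$ at $r_2=r_1$. Thus it remains to treat a field of odd characteristic $p$, which is the substance of the proof.

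For odd $p$ I would induct on $r_1+r_2$, the cases $\min(r_1,r_2)=0$ forming the base together with a direct check of $r_1=r_2=1$, where $|B_1^\ast+B_2^\ast|\ge 3\ge 8/3$ (if the two nonzero vectors are independent the sumset has four elements, and if $b_2=\lambda b_1$ then $\{0,1,\lambda,1+\lambda\}$ contains at least three distinct field elements, using $p\ne 2$). For the inductive step, fix $x\in B_1$; since $B_1$ is independent, $\rk(B_1\stm\{x\})=r_1-1$ and $B_1^\ast+B_2^\ast=T\cup(x+T)$ with $T=(B_1\stm\{x\})^\ast+B_2^\ast$, so $|B_1^\ast+B_2^\ast|=2|T|-|T\cap(x+T)|$ and, by induction, $|T|\ge(8/3)^{(r_1+r_2-1)/2}$. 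Hence it suffices to find $x\in B_1$ (or symmetrically $y\in B_2$) with $|T\cap(x+T)|\le(2-\sqrt{8/3})|T|$. If some $x\in B_1$ is transversal to $\mathrm{span}(T)=\mathrm{span}\big((B_1\stm\{x\})\cup B_2\big)$, then $T$ and $x+T$ are disjoint and we win with room to spare; an elaboration — peeling one $x\in B_1$ and one $y\in B_2$ at once, so that $B_1^\ast+B_2^\ast=T\cup(x+T)\cup(y+T)\cup(x+y+T)$ and the total rank drops by $2$ — covers every configuration in which $x$ and $y$ can be chosen so that both lie outside $\mathrm{span}(T)$: then only one pair among the four translates can overlap, so the union has size $\ge 3|T|\ge\tfrac{8}{3}|T|$.

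The decisive and hardest case is when $\mathrm{span}(B_1)=\mathrm{span}(B_2)$ (so $B_1,B_2$ are two bases of one space), and more generally when the two spans overlap heavily: there no peeled element is transversal to $\mathrm{span}(T)$, and one must bound the overlaps $|T\cap(x+T)|$ — or those among $T,x+T,y+T,x+y+T$ — directly, from the explicit box structure of $T=(B_1\stm\{x\})^\ast+(B_2\stm\{y\})^\ast$. This is exactly where the hypothesis $p\ne 2$ is essential: it keeps $0,x,2x$ distinct, so that for $x=y$ the translates $T,\,x+T,\,2x+T$ cannot collapse onto each other, and more generally it excludes the degeneracies special to characteristic $2$. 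I expect the main obstacle to be organizing this case so that the accumulated per-step gains reach $(8/3)^{(r_1+r_2)/2}$ rather than the mere $2^{(r_1+r_2)/2}$ produced by naive peeling; this bookkeeping is presumably also the origin of the unusual constant $8/3$.
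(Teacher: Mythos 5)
Your reduction to independent $B_1,B_2$ and your treatment of infinite characteristic via Theorem\reft{char0} are fine, but the heart of the theorem --- odd characteristic with $\mathrm{span}(B_1)=\mathrm{span}(B_2)$, i.e.\ two bases of the same space --- is exactly the case you defer (``the decisive and hardest case \dots one must bound the overlaps \dots directly''), so the proposal does not contain a proof. Worse, the inductive peeling you sketch runs into a concrete obstruction already in the simplest instance $B_1=B_2=\{e_1\longc e_n\}$: there $T=(B_1\stm\{e_n\})^\ast+B_2^\ast=\{0,1,2\}^{n-1}\times\{0,1\}$ and $e_n+T$ meets $T$ in exactly half of it, so one peeling step gains only a factor $3/2<\sqrt{8/3}$. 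Your double-peeling variant repairs this when $x=y$ (giving the three disjoint translates $T',x+T',2x+T'$) or when $x,y$ fall outside $\mathrm{span}(T')$, but says nothing about the genuinely hard configurations such as the paper's own near-extremal example $B_1=\{e_1\longc e_{2l}\}$, $B_2=\{e_1\pm e_2\longc e_{2l-1}\pm e_{2l}\}$ over $\F_3$, where no element of either basis is transversal and $x\ne y$ in general. No bookkeeping scheme is supplied that accumulates to $(8/3)^{(r_1+r_2)/2}$.

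The paper's argument is not an induction at all. It applies a global Cauchy--Schwarz (additive energy) bound, Proposition\refp{sumset}:
$$ |A+B| \ \ge\ \frac{|A|^2|B|^2}{\sqrt{T(A)\,T(B)}}, $$
where $T(\cdot)$ counts additive quadruples, to $A=B_1^\ast$, $B=B_2^\ast$, and then computes $T(B_i^\ast)$ \emph{exactly}: for an independent set $B_i$ of size $n$, every $z\in B_i^\ast-B_i^\ast$ is uniquely $\eps_1b_1\longp\eps_nb_n$ with $\eps_j\in\{-1,0,1\}$ (this is where odd or infinite characteristic enters, ensuring $-1,0,1$ are distinct scalars), has $2^{n-|\eps_1|\longm|\eps_n|}$ representations as a difference, and summing the squares gives $T(B_i^\ast)=6^n$. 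The constant $8/3$ is then simply $4^{r_1+r_2}/6^{(r_1+r_2)/2}=(16/6)^{(r_1+r_2)/2}$ --- it comes from this energy computation, not from per-step gains. If you want to salvage a local/inductive proof you would need an overlap estimate valid for arbitrary bases of a common space, which is precisely the content you have not supplied; as it stands the proposal has a genuine gap.
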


It is difficult to expect that the constant $8/3$ is best possible in this
context. On the other hand, it cannot be replaced by a value larger than
$\sqrt 8$, at least for the underlying field of characteristic $3$: for, if
$V$ is an even-dimension vector space over such a field, and if
 $B_1=\{e_1\longc e_{2l}\}$ is a basis of $V$, then for the basis
  $$ B_2 := \{e_1+e_2,e_1-e_2, \, e_3+e_4,e_3-e_4
                             \longc e_{2l-1}+e_{2l},e_{2l-1}-e_{2l}\} $$
we have $|B_1^*+B_2^*|=8^l$.

Yet another, completely different approach is presented in Section
\refs{lattice}, where the additive bases conjecture is interpreted in terms
of coverings of the vertices of the unit cube by translates of an integer
lattice. We postpone the discussion and exact statement of the result to
avoid aggregating notation and terminology at this stage.

\section{The proofs}

\begin{proof}[Proof of Theorem \reft{main}$'$]
For $m=2$ the assertion is immediate, as in this case $B^\ast=G$ for any
generating subset $B\seq G$. Assume for the rest of the proof that
 $m\ge 3$, and for $j\in[1,k]$ let $S_j:=B_1^\ast\longp B_j^\ast$.

The key ingredient of our argument is the Ruzsa sum triangle inequality,
which says that for any positive integer $n$ and any finite subsets
$A_0,A_1\longc A_n$ of an abelian group one has
  $$ |A_0|^{n-1}|A_1\longp A_n| \le |A_0+A_1|\dotsb |A_0+A_n|; $$
see \refb{r}. Applying this inequality with $n=m-1,\,A_0=S_{j-1}$, and
$A_1\longe A_n=B_j^\ast$, we obtain
  $$ |S_{j-1}|^{m-2}|(m-1)B_j^\ast|
                         \le |S_{j-1}+B_j^\ast|^{m-1};\quad j\in[2,k]. $$
As $(m-1)B_j^\ast=G$ and $S_{j-1}+B_j^\ast=S_j$, we derive that
   $$ |S_j| \ge |S_{j-1}|^{1-\frac1{m-1}} |G|^\frac1{m-1} $$
and consequently
  $$ \frac{|G|}{|S_j|} \le \lpr \frac{|G|}{|S_{j-1}|} \rpr^{1-\frac1{m-1}} $$
for every $j\in[2,k]$. Iterating, we obtain
  $$ \ln \frac{|G|}{|S_j|}
          \le \lpr 1-\frac1{m-1}\rpr^{j-1} \ln\frac{|G|}{|S_1|}
                \le e^{-\frac{j-1}m} \ln\frac{|G|}{|S_1|}; \quad j\in[1,k]. $$
Let $r$ denote the rank of $G$. Since $|G|\le m^r$ and $|S_1|=|B_1^\ast|\ge
2^r$ (for if $C$ is a minimal generating subset of $B_1$, then
$|B_1^\ast|\ge|C^\ast|=2^{|C|}\ge 2^r$), using some basic calculus it is not
difficult to deduce that
  $$ \ln \frac{|G|}{|S_j|} < e^{-\frac{j+1}m} \ln |G|; \quad j\in[1,k]. $$
Thus, if $j=\lfl m\ln\log_2|G|\rfl$ (so that $k>2j>0$), then
  $$ |B_1^\ast\longp B_j^*|=|S_j|>\frac12\,|G|, $$
and similarly
  $$ |B_{j+1}^\ast\longp B_{2j}^*|>\frac12\,|G|. $$

We now use the fact that if two sets $S,T\seq G$ satisfy $|S|+|T|>|G|$, then
for any element $g\in G$ the sets $S$ and $g-T$ have non-empty intersection
in view of
  $$ |S\cap(g-T)| = |S|+|g-T|-|S\cup(g-T)| \ge |S|+|T|-|G| > 0; $$
hence $g\in S+T$ and therefore $S+T=G$. Applying this to the sets
$S:=B_1^\ast\longp B_j^*$ and $T:=B_{j+1}^\ast\longp B_{2j}^*$, we get
  $$ B_1^\ast\longp B_{2j}^* = G, $$
which implies the result.
\end{proof}

\begin{proof}[Proof of Theorem \reft{char0}]
We use induction on $k$, and for any fixed value of $k$ induction on
  $$ \min\{\rk(B_1)\longc\rk(B_k)\}. $$
The case $k=1$ follows from the observation that if $r=\rk(B_1)$ and
$b_1\longc b_r\in B_1$ are linearly independent, then all $2^r$ sums
  $$ \textstyle \sum_{i\in I} b_i;\quad I\seq[1,r] $$
are pairwise distinct. The case where $k\ge 2$ and
$\min\{\rk(B_1)\longc\rk(B_k)\}=0$ follows easily by the induction
hypothesis. Suppose, therefore, that $k\ge 2$ and $\rk(B_1)\longc\rk(B_k)$
are all positive. Fix arbitrarily $b\in B_k$ and write $B_0:=B_k\stm\{b\}$.
Choose a linear subspace $L$ complementing ${\rm Sp}\,\{b\}$ to the whole
vector space, and let $\pi$ denote the projection onto $L$ along $b$. We have
then
  $$ B_1^\ast\longp B_k^\ast
          = (B_1^\ast\longp B_{k-1}^\ast+B_0^\ast)
                         \cup(B_1^\ast \longp B_{k-1}^\ast+B_0^\ast+b), $$
and since
\begin{multline*}
  |(B_1^\ast \longp B_{k-1}^\ast+B_0^\ast+b)
                          \stm (B_1^\ast\longp B_{k-1}^\ast+B_0^\ast)| \\
      \ge |\pi(B_1^\ast \longp B_{k-1}^\ast+B_0^\ast)|
                                = |(\pi(B_1))^\ast \longp (\pi(B_k))^\ast|,
\end{multline*}
it follows by the induction hypothesis (and in view of
$\rk(B_0)\ge\rk(B_k)-1$ and $\rk(\pi(B))\ge\rk(B)-1$, for any vector set $B$)
that
\begin{align*}
  |B_1^\ast\longp B_k^\ast|
    &= |B_1^\ast\longp B_{k-1}^\ast+B_0^\ast| \\
    &\hspace{.5in} + |(B_1^\ast \longp B_{k-1}^\ast+B_0^\ast+b)
                          \stm (B_1^\ast\longp B_{k-1}^\ast+B_0^\ast)| \\
    &\ge \prod_{j=1}^{k-1}\lpr 1+\frac1j\rpr^{\rk(B_j)}
                     \cdot \lpr 1+\frac1k \rpr^{\rk(B_k)-1}
                       +\ \prod_{j=1}^{k}\lpr 1+\frac1j\rpr^{\rk(B_j)-1} \\
    &= \prod_{j=1}^{k}\lpr 1+\frac1j\rpr^{\rk(B_j)},
\end{align*}
as desired.
\end{proof}

For a finite subset $B$ of an abelian group, let
  $$ T(B) := \{(b_1,b_2,b_3,b_4) \in B\times B\times B\times B
                                               \colon b_1+b_2=b_3+b_4\}. $$
The proof of Theorem \reft{charp} is based on
\begin{proposition}\label{p:sumset} For any finite, non-empty subsets $A$ and
$B$ of an abelian group we have
  $$ |A+B| \ge \frac{|A|^2|B|^2}{\sqrt{T(A)T(B)}}. $$
\end{proposition}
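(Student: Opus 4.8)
The plan is to push the representation function of the sumset through two applications of the Cauchy--Schwarz inequality. For an element $x$ of the group write $r(x):=|\{(a,b)\in A\times B\colon a+b=x\}|$ for its number of representations; then $\sum_x r(x)=|A|\,|B|$, the sum being over the $|A+B|$ elements $x$ with $r(x)>0$. Cauchy--Schwarz gives
$$ |A|^2|B|^2=\Big(\sum_x r(x)\Big)^2\le|A+B|\cdot\sum_x r(x)^2 , $$
so it suffices to show $\sum_x r(x)^2\le\sqrt{T(A)\,T(B)}$, where I write $T(A)$ also for the cardinality of the set of additive quadruples in $A$ introduced above.

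To estimate $E:=\sum_x r(x)^2=|\{(a,b,a',b')\in A\times B\times A\times B\colon a+b=a'+b'\}|$, I would rewrite the relation $a+b=a'+b'$ as $a-a'=b'-b$ and sum over the common value $d$ of the two sides, obtaining
$$ E=\sum_d r_{A-A}(d)\,r_{B-B}(d), $$
where $r_{A-A}(d):=|\{(a,a')\in A\times A\colon a-a'=d\}|$ and $r_{B-B}(d):=|\{(b,b')\in B\times B\colon b'-b=d\}|$. A second Cauchy--Schwarz then yields
$$ E\le\Big(\sum_d r_{A-A}(d)^2\Big)^{1/2}\Big(\sum_d r_{B-B}(d)^2\Big)^{1/2}. $$
Finally, $\sum_d r_{A-A}(d)^2$ is the number of quadruples $(a_1,a_2,a_3,a_4)\in A\times A\times A\times A$ with $a_1-a_2=a_3-a_4$, and, the group being abelian, this relation is equivalent to $a_1+a_4=a_2+a_3$; hence $\sum_d r_{A-A}(d)^2=T(A)$, and similarly $\sum_d r_{B-B}(d)^2=T(B)$. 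Chaining the three displays gives $E\le\sqrt{T(A)\,T(B)}$, and the proposition follows.

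There is no real obstacle here: every step is either a one-line double-counting identity or the Cauchy--Schwarz inequality. The single point worth a moment's care is the last identity $\sum_d r_{A-A}(d)^2=T(A)$, i.e.\ that counting additive quadruples of $A$ through differences gives the same total as counting them through sums; this is where commutativity of the group is used, and it is precisely why the sum-based quantity $T(A)$ is the natural object in the statement and why no hypothesis on the group beyond commutativity is required.
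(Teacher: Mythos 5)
Your proof is correct and follows essentially the same route as the paper's: Cauchy--Schwarz on the representation function of $A+B$, rewriting the additive energy as $\sum_d \nu_{A-A}(d)\nu_{B-B}(d)$, and a second Cauchy--Schwarz to reach $\sqrt{T(A)T(B)}$. Your closing remark making explicit that the difference-based count $\sum_d r_{A-A}(d)^2$ equals the sum-based $T(A)$ is a small point the paper leaves implicit, but the argument is the same.
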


\begin{proof}
For a group element $z$ write
\begin{align*}
  \nu_{A-A}(z) &:= |\{(a_1,a_2)\in A\times A\colon z=a_1-a_2\}|, \\
  \nu_{B-B}(z) &:= |\{(b_1,b_2)\in B\times B\colon z=b_1-b_2\}|, \\
\intertext{and}
  \nu_{A+B}(z) &:= |\{ (a,b)\in A\times B\colon z=a+b \}|.
\end{align*}
By the Cauchy-Schwarz inequality, we have
  $$ \frac{|A|^2|B|^2}{|A+B|}
         = \frac1{|A+B|}\,\lpr \sum_z \nu_{A+B}(z) \rpr^2
                                   \le \sum_z \big( \nu_{A+B}(z)\big)^2. $$
The sum in the right-hand side is the number of solutions of the equation
$x_1+y_1=x_2+y_2$ in the variables $x_1,x_2\in A$ and $y_1,y_2\in B$.
Rewriting this equation as $x_1-x_2=y_2-y_1$ we see that the number of its
solutions is equal to
\begin{align*}
  \sum_z \nu_{A-A}(z)\nu_{B-B}(z)
         &\le \lpr \sum_z \big( \nu_{A-A}(z) \big)^2 \rpr^{1/2}
                     \lpr \sum_z \big( \nu_{B-B}(z) \big)^2 \rpr^{1/2} \\
         &=   \sqrt{T(A)T(B)},
\end{align*}
and the result follows.
\end{proof}

\begin{proof}[Proof of Theorem \reft{charp}]
Removing dependent vectors from the sets $B_i$, we assume without loss of
generality that $\rk(B_i)=|B_i|$ and $|B_i^\ast|=2^{|B_i|}$ for
$i\in\{1,2\}$. We apply Proposition \refp{sumset} to the sets $B_1^\ast$ and
$B_2^\ast$. The quantities $T(B_i^\ast)$ can be explicitly calculated as
follows. For a group element $z$ let
  $$ \nu_i(z) := | \{ (b',b'')\in B_i^\ast\times B_i^\ast
                               \colon z=b'-b'' \} |; \quad i\in\{1,2\}, $$
so that
  $$ T(B_i^\ast)=\sum_{z\in B_i^\ast-B_i^\ast} (\nu_i(z))^2. $$
Fix $i\in\{1,2\}$, set $n:=|B_i|$, and write $B_i=\{b_1\longc b_n\}$. The
condition $z\in B_i^\ast-B_i^\ast$ means that $z=\eps_1b_1\longp\eps_nb_n$
with $\eps_1\longc\eps_n\in\{-1,0,1\}$, and it is easily seen that in this
case
  $$ \nu_i(z) = 2^{n-|\eps_1|\longm|\eps_n|}. $$
Consequently, we have
\begin{align*}
  T(B_i^\ast)
    &= \sum_{\eps_1\longc\eps_n=-1}^1 2^{2n-2|\eps_1|\longm2|\eps_n|} \\
    &= 2^{2n} \prod_{i=1}^n \sum_{\eps_i=-1}^1 2^{-2|\eps_i|} \\
    &= 6^n
\end{align*}
and it follows that
  $$ |B_1^\ast+B_2^\ast|
         \ge \frac{2^{2|B_1|+2|B_2|}}{\sqrt{6^{|B_1|+|B_2|}}}
                 = \lpr\frac83\rpr^{(|B_1|+|B_2|)/2}
                   = \lpr\frac83\rpr^{(\rk(B_1)+\rk(B_2))/2}. $$
\end{proof}

We remark that the approach used in the proof of Proposition \refp{sumset}
can be combined with character sum technique in the spirit of \refb{alm}, to
show that for any system of bases $B_1\longc B_k$ of an $r$-dimensional
vector space over a field of finite characteristic $p>2$ one has
  $$ |B_1^\ast\longp B_k^\ast| \ge \frac{p^r}{(1+\sig_p(k))^r} \, , $$
where
  $$ \sig_p(k) = \sum_{u=1}^{p-1} \left( \cos\pi\frac up \right)^{2k}. $$
(Hint for the interested reader: if $\nu(g)$ denotes the number of
representations of the vector $g$ in the form $g=b_1\longp b_k$ with
 $b_i\in B_i^\ast$ for $i\in[1,k]$, then the sum $\sum_g(\nu(g))^2$ counts
the number of $2k$-tuples $(x_1\longc x_k,y_1\longc y_k)$ with
 $x_1\longp x_k=y_1\longp y_k$ and $x_i,y_i\in B_i^\ast$ for $i\in[1,k]$.
This number is easily expressed using character sums.) Since
$1+\sig_p(2)=3p/8$ for any prime $p>2$, the estimate above extends the result
of Theorem~\reft{charp}.

\section{Additive bases via lattice coverings}\label{s:lattice}

Suppose that $k\ge 2$ and $r\ge 1$ are fixed integers. Given a prime $p$, we
say that a lattice in $\Z^{kr}$ is \emph{$p$-oblique} if, whenever for some
$i_0\in[1,k]$ a lattice vector
  $$ (z_{11}\longc z_{1r}\longc z_{k1}\longc z_{kr}) $$
satisfies
  $$ z_{ij}\equiv 0\!\!\!\pmod p\ \text{ for all }
                                       i\in[1,k]\stm\{i_0\},\ j\in[1,r], $$
it also satisfies
  $$ z_{i_0j}\equiv 0\!\!\!\pmod p\ \text{ for all } j\in[1,r]. $$
(The term \emph{oblique} is meant as an indication that the lattice is not
aligned with the coordinate hyperplanes.) Furthermore, we define the
\emph{covering number} of a lattice $\Lam\le\Z^{kr}$ to be the minimal number
of translates of $\Lam$, containing in their union all vertices of the unit
cube $[0,1]^{kr}$, and we denote this quantity by $C(\Lam)$; thus,
  $$ C(\Lam) = \min \{ |S|\colon S\seq\Z^{kr},\ \{0,1\}^{kr}\seq S+\Lam \}. $$

We conclude this paper with the following, somewhat surprising, result.
\begin{theorem}\label{t:cover}
For any integer $k\ge 2$ and $r\ge 1$, prime $p$, and field $\F$ of
characteristic $p$, we have
  $$ \min_{B_1\longc B_k\seq\F^r} |B_1^\ast\longp B_k^\ast|
                                         \ge \min_{\Lam\le\Z^{kr}} C(\Lam), $$
where the minimum in the left-hand side extends over all systems of $k$ bases
of $\F^r$, and in the right-hand side over all $p$-oblique lattices in
$\Z^{kr}$. If $k\le|\F|$ then, indeed, equality holds; moreover, in this case
there is a $p$-oblique lattice $p\Z^{kr}\le\Lam\le\Z^{kr}$ with
$\det\Lam=p^r$, on which the minimum in the right-hand side is attained.
\end{theorem}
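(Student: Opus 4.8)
The plan is to translate the left-hand side into a question about which cosets of a lattice meet the unit cube, via an explicit dictionary between systems of bases and lattices.

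To a system of bases $B_1\longc B_k$ of $\F^r$, with $B_i=\{b_{i1}\longc b_{ir}\}$, I attach the homomorphism $\Phi\colon\Z^{kr}\to\F^r$ determined by $\Phi(e_{ij})=b_{ij}$, and set $\Lam_B:=\ker\Phi$. Since $\operatorname{char}\F=p$ we have $p\Z^{kr}\le\Lam_B$; and if a vector $z\in\Lam_B$ has every coordinate outside some block $i_0$ divisible by $p$, then $\Phi(z)=\sum_j z_{i_0j}b_{i_0j}=0$ (the remaining terms vanish since $pb_{ij}=0$), so all $z_{i_0j}$ are divisible by $p$ by the $\F$-, hence $\F_p$-, independence of $B_{i_0}$; thus $\Lam_B$ is $p$-oblique. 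Because $\{0,1\}\hookrightarrow\F_p$ is injective, the number of cosets of $\Lam_B$ meeting $\{0,1\}^{kr}$ equals $|\Phi(\{0,1\}^{kr})|=|B_1^\ast\longp B_k^\ast|$, and this number of cosets is exactly $C(\Lam_B)$. Hence $|B_1^\ast\longp B_k^\ast|=C(\Lam_B)$, and as the $\Lam_B$ form a subfamily of the $p$-oblique lattices the inequality ``$\ge$'' is immediate.

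For ``$\le$'' when $k\le|\F|$, take any $p$-oblique $\Lam$. Replacing it by $\Lam+p\Z^{kr}$ keeps it $p$-oblique and cannot raise $C$ (enlarging a lattice only coarsens its cosets), so I may assume $p\Z^{kr}\le\Lam$; put $V:=\Z^{kr}/\Lam$ and let $\bar\beta_i\seq V$ be the images of the $i$-th block of standard basis vectors. Then $p$-obliqueness says exactly that each $\bar\beta_i$ is $\F_p$-independent, and $C(\Lam)=|\bar\beta_1^\ast\longp\bar\beta_k^\ast|$. I now look for an $\F_p$-linear map $\psi\colon V\to\F^r$ carrying each $\bar\beta_i$ to an $\F$-basis of $\F^r$: then $B_i:=\psi(\bar\beta_i)$ is a system of bases with $|B_1^\ast\longp B_k^\ast|=|\psi(\bar\beta_1^\ast\longp\bar\beta_k^\ast)|\le C(\Lam)$, giving ``$\le$''. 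Fixing an $\F_p$-basis $f_1\longc f_d$ of $V$ and writing $W=[\psi(f_1)\longc\psi(f_d)]$, the space $\operatorname{Hom}_{\F_p}(V,\F^r)$ is coordinatised over $\F$ by the entries of $W$, and the condition that $\psi(\bar\beta_i)$ be an $\F$-basis reads $\det M_i\ne 0$, where $M_i=WA_i^{\top}$ and $A_i$ is the (rank-$r$) coordinate matrix of $\bar\beta_i$; by Cauchy--Binet, $\det M_i=\sum_S m_{i,S}\det(W_S)$ with fixed scalars $m_{i,S}\in\F_p$ not all zero. Thus the bad locus for block $i$ is cut out, on the Grassmannian $\mathrm{Gr}(r,d)$ of row spaces of $W$, by a (proper, by non-degeneracy of the Pl\"ucker embedding) hyperplane section, and $k\le|\F|$ such sections should fail to cover $\mathrm{Gr}(r,d)(\F)$, producing a good $\psi$. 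For the ``moreover'', I would show that among minimizing $p$-oblique lattices one can always take $p\Z^{kr}\le\Lam$ with $\dim V=r$, i.e. $\det\Lam=p^r$: if $\dim V>r$, choose a line in $V$ lying in none of the $r$-dimensional subspaces $\langle\bar\beta_i\rangle$ (possible unless these $k$ subspaces cover $V$, which forces $|\bar\beta_1^\ast\longp\bar\beta_k^\ast|$ to be too large for $\Lam$ to minimize) and pass to the quotient, lowering $\dim V$ without raising $C$.

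The step I expect to be the main obstacle is the existence of $\psi$: turning ``$k\le|\F|$ hyperplane sections cannot cover $\mathrm{Gr}(r,d)(\F)$'' into an honest estimate — either by a point count on the Grassmannian, where the constants should work out precisely at the threshold $k=|\F|$, or by an inductive, block-by-block construction of $\psi$ that keeps enough free parameters in reserve. This is where the hypothesis $k\le|\F|$ is genuinely used; it is also the point at which the reduction to $\det\Lam=p^r$ must be reconciled with the fact that an individual $p$-oblique lattice need not itself be of the form $\Lam_B$.
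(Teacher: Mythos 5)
Your first half (the inequality ``$\ge$'') is correct and is essentially the paper's own argument: $\Lam_B=\ker\Phi$ is a $p$-oblique lattice containing $p\Z^{kr}$ with $C(\Lam_B)=|\Phi(\{0,1\}^{kr})|=|B_1^\ast\longp B_k^\ast|$. The reverse direction, however, has a genuine gap at exactly the point you flag: the existence of an $\F_p$-linear $\psi\colon V\to\F^r$ carrying every $\bar\beta_i$ to an $\F$-basis is the entire content of the hypothesis $k\le|\F|$ --- it is precisely what the paper isolates as Proposition \refp{lin} (in the dual, matrix form: find $\sB_1\longc\sB_k\in{\rm GL}_r(\F)$ with $\sB_1\sL_1\longp\sB_k\sL_k=0$) and proves in the Appendix via the Alon--Tarsi polynomial lemma --- and neither of your two suggested routes is carried out. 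Worse, the first route cannot work as stated. Each bad locus $\{\det(WA_i^\top)=0\}$ is the Schubert-type section consisting of those $W$ whose kernel meets the fixed $r$-space $U_i=\mathrm{im}(A_i^\top)$ nontrivially, and its complement has exactly $q^{r(d-r)}$ points (the complements of $U_i$), where $q=|\F|$. For $\mathrm{Gr}(2,4)$ this means each section has $(q^2+1)(q^2+q+1)-q^4=q^3+2q^2+q+1$ points, and $k=q$ of them account for $q^4+2q^3+q^2+q$ incidences, which exceeds the total $(q^2+1)(q^2+q+1)$ for every $q\ge2$. So the union bound over points of the Grassmannian is useless precisely at the threshold $k=|\F|$, and ``the constants work out'' is false.

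The reformulation itself is sound and can be closed by an elementary argument you did not give: since $\det(WA_i^\top)=0$ iff $\ker W\cap(U_i\otimes\F)\ne0$, you need a common complement $K$ of dimension $d-r$ to the $k$ subspaces $U_i\otimes\F$ of $\F^d$, and such a $K$ can be built one vector at a time --- if $\dim K_j=j<d-r$ and $K_j$ meets no $U_i\otimes\F$, each $K_j+(U_i\otimes\F)$ is a proper subspace, and a union of $k\le|\F|$ proper subspaces of $\F^d$ misses some vector because it has at most $k(|\F|^{d-1}-1)+1<|\F|^d$ elements (trivially, if $\F$ is infinite). This is the ``inductive construction keeping parameters in reserve'' you allude to, but as written your proposal asserts the key existence statement rather than proving it. Two smaller points: your detour for the ``moreover'' clause (quotienting $V$ down to dimension $r$) is itself incomplete when $k>p$, since $k$ proper $\F_p$-subspaces $\langle\bar\beta_i\rangle$ can then cover $V$ and your parenthetical escape route is not justified; in fact the clause needs no separate argument, as it follows by combining the two halves --- the minimizing system of bases produces a lattice $\Lam_\cB\ge p\Z^{kr}$ with $\det\Lam_\cB=p^r$ attaining the common minimum.
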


\begin{remark}
It is not clear to us to what extent the condition $k\le|\F|$ is essential
and whether the assertion of Theorem \reft{cover} fails in a critical way if
this condition is dropped.
\end{remark}

We notice that if $k,r$, and $p$ are as in Theorem \reft{cover}, then the
lattice $\Lam$ of all integer vectors
 $(z_{11}\longc z_{kr})$ with
  $$ z_{1j}\longp z_{kj} \equiv 0\pmod p;\quad j=1\longc r $$
has covering number $C(\Lam)=\min\{(k+1)^r, p^r\}$. To see this, observe that
different translates of $\Lam$ correspond in a natural way to integer vectors
of the form $(z_{11}\longc z_{1r},0\longc 0)$ with
 $z_{11}\longc z_{1r}\in[0,p-1]$, and on the other hand, such a
vector is congruent modulo $\Lam$ to a vector from $\{0,1\}^{kr}$ if and only
if $z_{11}\longc z_{1r}\le k$.

By Theorem \reft{cover}, the additive bases conjecture will follow if one can
show that there are no $p$-oblique lattices with the covering number smaller
than $p^r$; that is, to show that for any prime $p$ there exists an integer
$k=k(p)>0$ such that the set $\{0,1\}^{kr}$ cannot be covered by fewer than
$p^r$ translates of a $p$-oblique lattice, for any integer $r\ge 1$. By all
we know, the conjecture may even be true with $k(p)=p$. Theorem \reft{cover}
shows this strong form of the conjecture is equivalent to the assertion that
one cannot cover the set $\{0,1\}^{pr}$ by fewer than $p^r$ translates of a
$p$-oblique lattice.

\begin{proof}[Proof of Theorem \reft{cover}]
Suppose that $\cB=\{B_1\longc B_k\}$ is a system of bases of $\F^r$. We write
  $$ B_i = \{b_{i1}\longc b_{ir}\};\quad i=1\longc k $$
and define a linear mapping $\phi_\cB\colon \Z^{kr}\to\F^r$ by
  $$ \phi_\cB(x_{11}\longc x_{kr}) = x_{11}b_{11}\longp x_{1r}b_{1r}
                                \longp x_{k1}b_{k1}\longp x_{kr}b_{kr}, $$
so that the sumset $B_1^\ast\longp B_k^\ast$ is the image of $\{0,1\}^{kr}$
under $\phi_\cB$.

Let $\Lam_\cB:=\ker\phi_\cB$; thus, $\Lam_\cB$ is a sublattice of $\Z^{kr}$,
lying above $p\Z^{kr}$, and hence a full-rank sublattice. From the fact that
for any integer $x_{11}\longc x_{k-1,r}$ there are unique $x_{k1}\longc
x_{kr}\in[0,p)$ with $(x_{11}\longc x_{kr})\in\Lam_\cB$, it follows that
$\det\Lam_\cB=p^r$. Furthermore, $\Lam_\cB$ is (evidently) $p$-oblique.

Since for $u,v\in\Z^{kr}$, and in particular for $u,v\in\{0,1\}^{kr}$, the
equality $\phi_\cB(u)=\phi_\cB(v)$ is equivalent to
 $v\equiv u\!\pmod{\Lam_\cB}$, we have
  $$ |B_1^\ast\longp B_k^\ast| = |\phi_\cB(\{0,1\}^{kr})| = C(\Lam_\cB). $$
This proves the first assertion of the theorem, and to complete the proof we
assume that $k\le|\F|$ and show that for every $p$-oblique lattice
$\Lam\le\Z^{kr}$ there exists a system $\cB$ of $k$ bases of $\F^r$ with
$\Lam_\cB\ge\Lam$ and consequently, with $C(\Lam_\cB)\le C(\Lam)$.

Replacing $\Lam$ with $\Lam+p\Z^{kr}$ we can assume that $\Lam$ is of full
rank. Let $\Z^{kr}=V_1\oplus\dotsb\oplus V_k$, where $V_i$ is the rank-$r$
sublattice, spanned over $\Z$ by the ``$i$\/th coordinate block'' (for each
$i=1\longc k$), fix a basis $\{l^{(1)}\longc l^{(kr)}\}$ of $\Lam$, and for
each $t\in[1,kr]$ consider the decomposition
  $$ l^{(t)}=l^{(t)}_1\longp l^{(t)}_k;
                        \quad l^{(t)}_i\in V_i\ \text{ for } i=1\longc k. $$
What we want $\cB$ to satisfy is that $\phi_\cB(l^{(t)})=0$ for every
$t\in[1,kr]$. Writing $\cB=\{B_1\longc B_k\}$ and
 $B_i=\{b_{i1}\longc b_{ir}\}$ for each $i=1\longc k$, the condition to
secure takes the shape
  $$ \sB_1 l^{(t)}_1 \longp \sB_k l^{(t)}_k=0; \quad t\in[1,kr], $$
where $\sB_i$ is the $r\times r$ matrix over $\F$, whose columns are formed
by the vectors $b_{i1}\longc b_{ir}$. Thus, if for each $i=1\longc k$ by
$\sL_i$ we denote the $r\times kr$ matrix over $\Z$, whose columns are formed
by the vectors $l^{(1)}_i\longc l^{(kr)}_i$, then what we ultimately want to
prove is the existence of $\sB_1\longc\sB_k\in{\rm GL}_r(\F)$ satisfying
$\sB_1\sL_1\longp\sB_k\sL_k=0$.

We notice that the matrices $\sL_i$ actually have integer entries, but we
treat them in the last equality, and will continue to treat for the rest of
the proof, as matrices over the $p$-element field $\F_p\le\F$, and we also
seek $\sB_i$ with the entries in $\F_p$. With this convention, the assumption
that $\Lam$ is $p$-oblique (which has not been used yet) guarantees that for
each $i\in[1,k]$, the intersection
  $$ \ker\sL_1\longi\ker\sL_{i-1} \cap\ker\sL_{i+1}\longi\ker\sL_k $$
is contained in $\ker\sL_i$. The proof is concluded by using
\begin{proposition}\label{p:lin}
Suppose that $r,n\ge 1$ and $k\ge 2$ are integers and $\sL_1\longc \sL_k$ are
matrices of size $r\times n$ over a field $\F$ with $|\F|\ge k$. If for each
$i\in[1,k]$ we have
  $$ \bigcap_{j\in[1,k]\colon j\neq i} \ker\sL_j \seq \ker\sL_i, $$
then there exist matrices $\sB_1\longc\sB_k\in{\rm GL}_r(\F)$ satisfying
  $$ \sB_1\sL_1\longp\sB_k\sL_k=0. $$
\end{proposition}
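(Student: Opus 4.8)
The plan is to reduce the problem to linear algebra over $\F$ and handle the invertibility constraint by a genericity (Schwartz–Zippel type) argument, which is where the hypothesis $|\F|\ge k$ enters.

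First I would dispose of the structural content. The hypothesis says precisely that for each $i$, every vector killed by all $\sL_j$ with $j\ne i$ is also killed by $\sL_i$; equivalently, $\ker\sL_i \supseteq \bigcap_{j\ne i}\ker\sL_j$, so that on the subspace $W := \sum_{j}\ker\sL_j^{\perp}$... no, better: I would phrase it as a statement about the column spaces after transposition, or — cleaner — work directly with the condition that the map $x\mapsto(\sL_1 x,\dots,\sL_k x)$ has the property that its $i$-th component is redundant given the others. The upshot I want is: there is a way to write, for each nonzero contribution, $\sL_i = \sum_{j\ne i} M_{ij}\sL_j$ for suitable matrices $M_{ij}$? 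That is false in general (the $\sL_j$ need not span row space). The correct extraction is: pick a common framework where $\ker\sL_i = \bigcap_{j\ne i}\ker\sL_j$ for all $i$ after possibly replacing each $\sL_i$ by an equivalent matrix — one may multiply $\sL_i$ on the left by any invertible matrix without changing $\ker\sL_i$ and without affecting whether a solution $\sB_i$ exists (absorb it into $\sB_i$), so WLOG each $\sL_i$ is in reduced form. Then the combined kernel condition forces a strong compatibility among the row spaces.

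The heart of the argument: I would look for $\sB_i$ of the form $\sB_i = I + t_i N_i$ or, more flexibly, build the $\sB_i$ column-block by column-block, but the clean route is a dimension count. Consider the linear subspace $\mathcal{S}\subseteq (\F^{r\times r})^k$ of all tuples $(\sB_1,\dots,\sB_k)$ — without invertibility — satisfying $\sum_i \sB_i\sL_i = 0$. This is cut out by $rn$ linear equations in $kr^2$ unknowns, but the kernel hypothesis guarantees $\mathcal{S}$ is large: for instance, for any single row-vector $v\in\F^r$ and any index $i$, and any $w$ in the row space of $\sL_i$ expressed via the others... I would prove $\mathcal{S}$ contains, for each $i$, an element whose $i$-th component is an arbitrary invertible matrix while the other components are chosen to compensate — this is exactly where $\bigcap_{j\ne i}\ker\sL_j\subseteq\ker\sL_i$ is used (it lets one solve $\sB_i\sL_i = -\sum_{j\ne i}\sB_j\sL_j$ for the other blocks once $\sB_i$ is fixed, possibly after using that the row space of $\sL_i$ lies in the span of the row spaces of the $\sL_j$, $j\ne i$). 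Then a tuple in $\mathcal{S}$ has \emph{each} $\sB_i$ ranging over an affine subspace of $\F^{r\times r}$ that contains some invertible matrix; the set where $\det\sB_i\ne 0$ for all $i$ is the nonvanishing locus of a product of $k$ polynomials, each of degree $r$ and not identically zero on the relevant affine space. The number of "bad" tuples is bounded via Schwartz–Zippel by $k\cdot(r/|\F|)$ times the size of the parameter space, so as long as $|\F|\ge k$... here I must be careful: Schwartz–Zippel gives a bad fraction at most $\sum_i r/|\F| = kr/|\F|$, which is $<1$ only if $|\F|>kr$, not $|\F|\ge k$.

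So the genuine obstacle — and the place I expect the real work — is squeezing the bound down from $|\F|\ge kr$ to $|\F|\ge k$. To get this I would \emph{not} treat the $r$ diagonal-type degrees of freedom inside one block as independent; instead I would parametrize $\sB_i = D_i + (\text{correction})$ where $D_i$ is a \emph{diagonal} (or more cleverly, a single scalar parameter $t_i$ times a fixed invertible matrix) and arrange the compatibility relations so that $\det\sB_i$ becomes a polynomial of degree $r$ in the single variable $t_i$ but with a controlled leading term, or — better — reduce to the case $r=1$ by a block/tensor argument and then the statement becomes: given scalars (row vectors) $\ell_1,\dots,\ell_k\in\F^n$ with the property that each $\ell_i$ is a linear combination of the others, find nonzero $\beta_i\in\F$ with $\sum\beta_i\ell_i = 0$; that is immediate when $|\F|\ge k$ by choosing the $\beta_i$ to avoid finitely many hyperplane-type conditions, one per dependency. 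The right framing, which I would pursue, is an inductive one on $k$: use the kernel condition to express things so that $\sL_k$ is dependent on $\sL_1,\dots,\sL_{k-1}$ in the appropriate sense, apply the inductive hypothesis to get $\sB_1,\dots,\sB_{k-1}$ invertible with $\sum_{i<k}\sB_i\sL_i$ equal to a prescribed matrix supported on $\ker\sL_k^{\perp}$, then pick $\sB_k$ invertible solving the last equation — at the cost of one scalar degree of freedom whose admissible values exclude at most one forbidden value per step, consuming the budget $|\F|\ge k$ exactly. The base case $k=2$ reads: $\ker\sL_1\subseteq\ker\sL_2$ and $\ker\sL_2\subseteq\ker\sL_1$, so $\ker\sL_1=\ker\sL_2$, hence $\sL_2 = C\sL_1$ for some $C$, and one takes $\sB_1 = C$, $\sB_2 = -I$, both invertible after the reduced-form normalization — this needs $C$ invertible, which holds because $\sL_1,\sL_2$ have the same rank and reduced form, so $C$ is a change of basis on the common row space extended by the identity. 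That inductive skeleton, with careful bookkeeping of which one scalar is free at each stage, is the proof I would write; the main danger is that the "prescribed matrix supported on the right subspace" may itself fail to be hit by an invertible $\sB_i$ for every scalar value, so the inductive statement must be strengthened to carry enough freedom, and getting that strengthening exactly right — so that the consumed budget is $k$ and not $kr$ — is the crux.
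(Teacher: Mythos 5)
Your proposal does not reach a proof; two concrete problems stand out. First, you dismiss as ``false in general'' the claim that one can write $\sL_i=\sum_{j\ne i}\sA_{ij}\sL_j$ for suitable $r\times r$ matrices $\sA_{ij}$ --- but this is precisely what the hypothesis gives, and it is the correct first move. Taking orthogonal complements (with respect to the standard bilinear form on $\F^n$), the condition $\bigcap_{j\ne i}\ker\sL_j\seq\ker\sL_i$ says exactly that the row space of $\sL_i$ is contained in the \emph{sum} of the row spaces of the $\sL_j$, $j\ne i$; you do not need the $\sL_j$ to span all of $\F^n$, only to span the rows of $\sL_i$. Setting $\sA_{ii}:=-\sI_r$ one gets $k$ identities $\sum_j\sA_{ij}\sL_j=0$ with invertible diagonal blocks, and the whole problem reduces to a statement about the matrices $\sA_{ij}$ alone (find $\sM_1\longc\sM_k$ so that every $\sB_j:=\sum_i\sM_i\sA_{ij}$ is non-degenerate), with the $\sL_i$ gone from the picture. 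By rejecting this reduction you lose the structure that makes the rest tractable.

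Second, the inductive skeleton you settle on is not viable as stated and you yourself flag its crux as unresolved. The kernel hypothesis is not inherited by the subsystem $\sL_1\longc\sL_{k-1}$ (the intersections over $j\ne i$, $j<k$ are smaller than those over all $j\ne i$), so the induction must be run on a different, strengthened statement ``with a prescribed right-hand side,'' which you never formulate; and your own accounting shows the naive genericity argument costs $|\F|>kr$ rather than $|\F|\ge k$. The actual resolution is a specific device you do not supply: after the reduction above, one inducts on $k$ inside the $\sA$-system, and when adjoining the last matrix $\sM_k$ one chooses invertible $\sU,\sV$ so that $\sV\sA_{k1}\sB_1^{-1}\sU$ is upper-triangular with zero diagonal and seeks $\sM_k=\sU\sD\sV$ with $\sD=\diag(t_1\longc t_r)$. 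This makes $\sB_1+\sM_k\sA_{k1}$ automatically non-degenerate, and the product of the remaining $k-1$ determinants is a non-zero polynomial of degree at most $k-1$ in \emph{each} variable $t_i$ separately, so the Alon--Tarsi non-vanishing lemma on a grid applies as soon as $|\F|\ge k$. Your vague gesture toward ``a diagonal $D_i$ plus correction'' points in this direction but does not contain the triangularization step or the per-variable degree bound, which are exactly what convert the budget from $kr$ to $k$.
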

The proof of Proposition \refp{lin} is presented in the Appendix.
\end{proof}

\appendix
\section*{Appendix: Proof of Proposition \refp{lin}.}

The assumptions $\bigcap_{j\in[1,k]\colon j\ne i}\ker\sL_j\seq\ker\sL_i$
shows that if a vector in $\F^n$ is orthogonal to every row of every matrix
$\sL_j$ with $j\in[1,k]\stm\{i\}$, then it is also orthogonal to every row of
the matrix $\sL_i$; in other words, every row of $\sL_i$ is a linear
combination of the rows of $\sL_j$ for $j\in[1,k]\stm\{i\}$. Thus, there
exist matrices $\sA_{ij}$ of size $r\times r$ over the field $\F$, for each
pair of indices $i,j\in[1,k]$ with $i\ne j$, such that
  $$ \sL_i = \sum_{j\in[1,k]\colon j\ne i} \sA_{ij}\sL_j;\quad i\in[1,k]. $$
For $i\in[1,k]$ we define $A_{ii}:=-\sI_r$, where $\sI_r$ is the identity
matrix of size $r\times r$, to get
  $$ \sA_{i1}\sL_1\longp\sA_{ik}\sL_k = 0;\quad i\in[1,k]. $$
(The reader may find it useful to consider the matrices $\sA_{ij}$ being
organized themselves into a $k\times k$ matrix, and the matrices
$\sL_1\longc\sL_k$ written as a ``column vector''.)

If we can find matrices $\sM_1\longc\sM_k$ of size $r\times r$ so that none
of the linear combinations
  $$ \sB_j := \sM_1\sA_{1j}\longp\sM_k\sA_{kj};\quad j\in[1,k] $$
is degenerate, then we are done in view of
  $$ \sum_{j=1}^k \sB_j\sL_j
    = \sum_{i=1}^k \sM_i \sum_{j=1}^k \sA_{ij} \sL_j = 0. $$
We complete the proof of the proposition establishing the existence of such
matrices $\sM_1\longc\sM_k$.

\begin{lemma}\label{l:lemma1}
Suppose that $r,k\ge 1$ are integers and that $\sA_{ij}\ (i,j\in[1,k])$ are
matrices of size $r\times r$ over a field $\F$ such that $A_{ii}$ is
non-degenerate for each $i\in[1,k]$. If $|\F|\ge k$, then there exist
matrices $\sM_1\longc\sM_k$ over $\F$ of size $r\times r$ so that  none of
  $$ \sM_1\sA_{1j}\longp\sM_k\sA_{kj};\quad j\in[1,k] $$
is degenerate.
\end{lemma}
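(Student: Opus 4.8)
The plan is to proceed by induction on $k$, exploiting the fact that each diagonal block $\sA_{jj}$ is invertible. For $k=1$ the claim is trivial (take $\sM_1:=\sA_{11}^{-1}$, or indeed $\sM_1:=\sI_r$). For the inductive step, suppose the result holds for $k-1$ matrix-rows. The idea is to set $\sM_k$ aside at first, choose $\sM_1\longc\sM_{k-1}$ by the inductive hypothesis applied to the $(k-1)\times(k-1)$ top-left sub-array $(\sA_{ij})_{i,j\in[1,k-1]}$ (whose diagonal blocks are non-degenerate), so that each of
  $$ \sN_j := \sM_1\sA_{1j}\longp\sM_{k-1}\sA_{k-1,j};\quad j\in[1,k-1] $$
is non-degenerate; then the task reduces to choosing a single matrix $\sM_k$ so that each of
  $$ \sB_j = \sN_j + \sM_k\sA_{kj};\quad j\in[1,k-1],\qquad
     \sB_k = \sM_1\sA_{1k}\longp\sM_{k-1}\sA_{k-1,k} + \sM_k\sA_{kk} $$
is invertible. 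Here I would hope to reduce to the case $\sM_k = \lambda\sI_r$ for a scalar $\lambda\in\F$: then, since $\sA_{kk}$ is non-degenerate, one may write $\sB_k = \sA_{kk}(\,(\text{fixed matrix})\,\sA_{kk}^{-1} + \lambda\sI_r\,)$ up to a harmless factor, and $\det\sB_j$ becomes, for each fixed $j$, a polynomial in $\lambda$ of degree $\le r$. Because each such polynomial is not identically zero — for $j\in[1,k-1]$, plugging $\lambda=0$ gives $\det\sN_j\ne 0$; for $j=k$, the leading coefficient in $\lambda$ is $\det\sA_{kk}\ne 0$ — and there are $k$ of them, their product is a nonzero polynomial of degree $\le kr$; however, a cleaner bound is available if we note each single $\det\sB_j(\lambda)$ has at most $r$ roots, so the "bad" set of $\lambda$ has size at most $kr$, which is too weak. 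The correct route is to observe that what we actually need is only $k$ simultaneous non-vanishings, and a standard fact (over a field with $|\F|\ge k$, any $k$ polynomials in one variable, each of which has at least one value where it is nonzero, admit a common nonzero evaluation provided\dots) — this is precisely where the hypothesis $|\F|\ge k$ must be used with care.

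Let me restructure to make the counting honest. Rather than taking $\sM_k$ scalar, I would look for $\sM_k$ of the form $\sM_k = \sN_i\,\sA_{ki}^{-1}\cdot(\text{adjustment})$\dots this is getting complicated; the clean approach is the following. Introduce one scalar parameter $t\in\F$ and set $\sM_i := t^{\,i-1}\sM_i^0$ where $\sM_i^0$ are to-be-chosen fixed matrices, or — better — proceed entirely differently: use induction but at each stage perturb by a rank-one or scalar amount and count that the number of forbidden field elements is at most $k-1 < |\F|$, not $kr$. Concretely, assume inductively we have $\sM_1\longc\sM_{k-1}$ making $\sN_1\longc\sN_{k-1}$ invertible. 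Consider $\sM_k := s\,\sA_{kk}^{-1}$ for a scalar $s\in\F$ (legitimate since $\sA_{kk}$ is invertible). Then $\sB_k = \sM_1\sA_{1k}\longp\sM_{k-1}\sA_{k-1,k} + s\sI_r$, so $\det\sB_k$ is a monic-up-to-sign degree-$r$ polynomial in $s$, nonzero; and for each $j\in[1,k-1]$, $\sB_j = \sN_j + s\,\sA_{kk}^{-1}\sA_{kj}$, with $\det\sB_j$ a degree-$\le r$ polynomial in $s$ that is nonzero since at $s=0$ it equals $\det\sN_j\ne 0$. So $\prod_{j=1}^k \det\sB_j(s)$ is a nonzero polynomial of degree at most $kr$ — and unfortunately $|\F|\ge k$ does not guarantee it has a non-root.

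So the genuine obstacle — and the step I expect to be the crux — is arranging the counting so that only $|\F|\ge k$ (not $|\F|\ge kr$) is needed. The resolution should be an induction that introduces the new matrix $\sM_k$ in a form depending on the \emph{previously chosen} $\sM_i$, so that for $j\in[1,k-1]$ the matrix $\sB_j$ stays \emph{fixed and invertible} (independent of the new parameter), and only $\sB_k$ is a function of one new scalar $s$, with $\det\sB_k(s)$ a nonzero polynomial of degree $\le r$ — still $r$, not $1$. To get down to a single forbidden value per step one instead does induction on the \emph{rank} $r$ as well, or works one column at a time; alternatively, one reduces modulo the hypothesis to the case $r=1$, where each $\sA_{ij}$ is a scalar, $\sB_j = \sum_i \sM_i \sA_{ij}$ is a scalar linear form in the unknowns $\sM_1\longc\sM_k$, and one must choose the $\sM_i$ so that $k$ given linear forms (each with at least one nonzero coefficient — the $j$th has nonzero $\sA_{jj}$-coefficient) are all nonzero; picking $\sM_i$ generically on a line, say $\sM_i = c_i + \mu d_i$, makes each $\sB_j$ a nonzero \emph{affine} function of $\mu$, hence with at most one bad $\mu$, giving $\le k$ bad values and requiring exactly $|\F|\ge k+1$\dots still off by one, fixable by noting one can also exploit the constant term. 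Thus I would: (1) reduce the lemma to $r=1$ by a block-diagonal / tensor argument (replace $\F$-entries by commuting-variable considerations or simply argue column-by-column using invertibility of $\sA_{jj}$); (2) in the $r=1$ scalar case, choose the $\sM_i$ along a generic line and invoke that $|\F|\ge k$ suffices because at least one of the $k$ linear forms can be made nonzero ``for free'' by the choice of direction, leaving $\le k-1$ constraints each killing one field element. The reduction in step (1) and the sharp counting in step (2) are where the real work lies; everything else is bookkeeping with determinants and the invertibility of the diagonal blocks.
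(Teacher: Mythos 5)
Your write-up correctly locates the crux --- that a naive one-parameter perturbation gives a product of determinants of degree up to $kr$ in the parameter, so the hypothesis $|\F|\ge k$ is not enough --- but it does not overcome it. Neither of the two exits you sketch is carried out: the ``reduction to $r=1$'' is asserted via an unspecified ``block-diagonal / tensor argument,'' and invertibility of an $r\times r$ matrix is not a columnwise or entrywise condition, so it is not clear such a reduction exists; and the ``generic line'' count in the scalar case is, by your own admission, off by one. As it stands the proposal is an accurate diagnosis of the difficulty together with a speculative plan, not a proof.

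The paper resolves the difficulty with two ideas you are missing. First, a case split: if $\sA_{k1}\longc\sA_{k,k-1}$ are all non-degenerate one simply takes $\sM_1\longe\sM_{k-1}=0$, $\sM_k=\sI_r$; otherwise some $\sA_{k1}$ is degenerate, and this degeneracy is turned into an asset. After choosing $\sM_1\longc\sM_{k-1}$ by induction (as you do), one picks $\sU,\sV\in{\rm GL}_r(\F)$ with $\sV\sA_{k1}\sB_1^{-1}\sU$ upper-triangular with zero diagonal and searches for $\sM_k=\sU\sD\sV$ with $\sD=\diag(t_1\longc t_r)$. The identity $\sB_1+\sU\sD\sV\sA_{k1}=\sU(\sI_r+\sD\cdot\sV\sA_{k1}\sB_1^{-1}\sU)\sU^{-1}\sB_1$ shows the first constraint is satisfied \emph{automatically for every} $\sD$, so only $k-1$ constraints remain. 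Second, the $r$ diagonal entries $t_1\longc t_r$ are used as \emph{independent} variables: each $\det(\sB_j+\sU\sD\sV\sA_{kj})$ is affine in each $t_i$ separately, so the product of the remaining $k-1$ determinants is a nonzero polynomial of degree at most $k-1$ in each variable (nonvanishing is checked via the constant term for $j<k$ and the coefficient of $t_1\dotsb t_r$, namely $\det(\sU\sV\sA_{kk})\ne 0$, for $j=k$), and the Alon--Tarsi non-vanishing lemma applies precisely under $|\F|\ge k$. It is this multilinearity in $r$ separate parameters --- not a reduction to $r=1$ --- that replaces the degree bound $kr$ by $k-1$.
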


\begin{proof}
We use induction on $k$. The case $k=1$ trivial; suppose that $k\ge 2$.

If $\sA_{k1}\longc\sA_{k\,k-1}$ are all non-degenerate, then we can take
$\sM_1\longe\sM_{k-1}=0$ and $\sM_k=\sI_r$. Assume therefore that one of the
matrices $\sA_{ki}$ with $i\in[1,k-1]$, say $\sA_{k1}$, \emph{is} degenerate.
Applying the induction hypothesis, find $\sM_1\longc\sM_{k-1}$ so that
  $$ \sB_j := \sM_1\sA_{1j}\longp\sM_{k-1}\sA_{k-1\,j};\quad j\in[1,k-1] $$
are non-degenerate, and let
  $$ \sB_k:=\sM_1\sA_{1k}\longp\sM_{k-1}\sA_{k-1\,k}. $$

Recalling that $\sA_{k1}$, and thus also $\sA_{k1}\sB_1^{-1}$, are
degenerate, we find $\sU,\sV\in{\rm GL}_r(\F)$ so that the matrix
$\sV\sA_{k1}\sB_1^{-1}\sU$ is upper-triangular with zeroes on the main
diagonal, and we seek $\sM_k$ in the form $\sM_k=\sU\sD\sV$ with a
diagonal matrix $\sD$. Thus, $\sD$ is to be found so that none of
$\sB_j+\sU\sD\sV\sA_{kj}$ for $j\in[1,k]$ are degenerate. Since
  $$ \sB_1+\sU\sD\sV\sA_{k1}
              = \sU(\sI_r+\sD\cdot\sV\sA_{k1}\sB_1^{-1}\sU)\sU^{-1}\sB_1, $$
this matrix is non-degenerate. Therefore, it remains to show that there is a
diagonal matrix $\sD$ such that
  $$ \sB_2+\sU\sD\sV\sA_{k2}\longc \sB_k+\sU\sD\sV\sA_{kk} $$
are non-degenerate.

To this end we consider the polynomial in $r$ variables over the field $\F$,
defined by
  $$ P(t_1\longc t_r) := \prod_{j=2}^k \det(\sB_j+\sU\sD\sV\sA_{kj});
                                         \quad  \sD=\diag(t_1\longc t_r). $$
This is a non-zero polynomial, as each factor is distinct from $0$:
indeed, the coefficient of $t_1\dotsb t_r$ in the last factor is
$\det(\sU\sV\sA_{kk})\ne 0$, and for each $j\in[2,k-1]$ the constant term
of the $j$\,th factor is $\det(\sB_j)\ne 0$. Furthermore, each factor is
linear in every variable $t_i$; hence the degree of $P$ in every variable
is $k-1$. It is well-known, however, that a non-zero polynomial in $r$
variables cannot vanish on a cartesian product of $r$ sets, provided that
the cardinality of each set exceeds the degree of the polynomial in the
corresponding variable; see, for instance, \cite[Lemma~2.1]{b:at}. This
implies the existence of $t_1\longc t_r\in\F$ with $P(t_1\longc t_r)\ne
0$, and hence the existence of $\sD$ with the required property.
\end{proof}

\bigskip

\bigskip

\end{document}